\theoremstyle{plain}
\newtheorem{theorem}{Theorem}[section]
\newtheorem{corollary}[theorem]{Corollary}
\newtheorem{lemma}[theorem]{Lemma}
\newtheorem{proposition}[theorem]{Proposition}
\theoremstyle{definition}
\newtheorem*{definition}{Definition}
\date{16 August 2010}
\title{Lebesgue-Fourier algebra of a hypergroup }
\author{M. Alaghmandan, R. Nasr-isfahani, and M. Nemati}
\begin{document}

\maketitle

\begin{abstract}
Let ${\mathcal L} A(H)$ be the Lebesgue-Fourier space of a hypergroup $H$ considered as a Banach space on $H$. In addition ${\mathcal L}A(H)$ is a Banach algebra with the multiplication inherited from $L^1(H)$. Moreover If $H$ is a regular Fourier hypergroup, ${\mathcal L}A(H)$  is a Banach algebra with pointwise multiplication. We study the amenability and character amenability of these two Banach algebras.\footnote{2000 {\it Mathematics Subject
Classification}: 43A62, 43A22.

 {\it Key words}: Hypergroups, Fourier spaces,  Lebesgue-Fourier algebras, amenability, character amenability.}
\end{abstract}

The theory of locally compact hypergroups in harmonic analysis was initiated independently by Dunkl \cite{dun} and
Jewett \cite{jew} in the early 1970s with small differences. In 1968 Pym \cite{pym} also considered
convolution structures which are close to this theory. A nice exposition of the subject can be found in \cite{ross}. We use the term {\em hypergroup} to refer to the locally compact hypergroups defined by Jewett \cite{jew}.

Fourier algebras over hypergroups have not been subject of much attention, since they do not need to form an algebra with pointwise multiplication. In his recent work, Muruganandam \cite{mu1} studies several hypergroups whose Fourier space forms a Banach algebra with pointwise multiplication. He defines a class of hypergroups called {\em regular Fourier hypergroups} where the associated Fourier space is a Banach algebra with its norm.

Lebesgue-Fourier algebras of locally compact groups were studied extensively by Ghahramani
and Lau in \cite{gl1}. Lebesgue-Fourier algebras are not only Segal algebras with convolution but also  abstract Segal algebras with respect to the Fourier algebra of a locally compact group.

After an overview of the preliminaries in section 1, in section 2, we define Lebesgue-Fourier space over a hypergroup.
We extend the definition and the main ideas of \cite{gl1} to hypergroup $H$; accordingly,
we consider ${\mathcal L}A(H)$ with the induced multiplication as a dense ideal of the algebra of
integrable functions, $L^1(H)$. Moreover, we show that the amenability of ${\mathcal L}A(H)$ with the multiplication induced
from $L^1(H)$ leads to the amenability and discrete being of $H$; also, $\alpha$-amenability of $H$ is equivalent to the $\phi_\alpha$-amenability
of ${\mathcal L}A(H)$ for each $\alpha \in \widehat{H}$. Section 3 is devoted to regular Fourier hypergroups $H$ when
Lebesgue-Fourier space with pointwise multiplication is a
dense ideal of the Fourier algebra $A(H)$ and therefore is a Banach algebra with pointwise multiplication, and as a result,
amenability of ${\mathcal L}A(H)$ with the pointwise product leads to compactness of $H$ and amenability of $A(H)$. Eventually, we show that
$A(H)$ and ${\mathcal L}A(H)$ are $\phi_x$-amenable for each $x\in H$.

\section{Preliminaries} \label{sec:prelim}

Let $H$ be a (locally compact Hausdorff) hypergroup which admits a
left Haar measure $m_H$. Here we use $dt$ instead of $dm_H(t)$. We
follow the definition of a hypergroup in \cite{jew}. For every
$x\in H$, suppose $\delta_x$ denotes the point measure at $x$. We
show the probability measure $\delta_x * \delta_y$ simply by $x y$
and $\check {x}$ denotes the involution of $x\in H$. Also $C(H)$
is the space of all continuous functions on $H$ and for all $x \in
H$ and $f\in C(H)$, $l_x f$ represents the (generalized) left
translation of $f$ by $x$,
$$
l_x f(y) := f(x y)=\int_{H} f(t) d(\delta_{x} * \delta_y)(t), \quad y\in H.
$$
As usual we consider $L^p(H)$ for $1\le p < \infty$ with respect to the left Haar measure of $H$ equipped with the translation operator whose norm is bounded from the above by 1. The translator can be extended to  $L^p (H)$ as an $L^1 (H)$-module by setting
$$
l_f g(x)= \int_{H}l_{\check{y}}g(x)f(y) dy
$$
for all $f \in L^1 (H)$ and $g \in L^p (H)$. If $f$ is a function on $H$, define $\tilde{f}$ by $\tilde{f}(x)=\overline{f(\check{x})}$ and $\check{f}$ by $\check{f}(x)=f(\check{x})$ for each $f \in L^p(H)$ and $x\in H$. The Banach space $L^{1}(H)$ is a Banach algebra with
$ f*g:=l_f g$
when $f,g\in L^1(H)$.

Let $\Sigma$ denote the set of equivalence classes of
representations of $H$ and $\lambda$ denote the left regular
representation of $H$ on $L^2(H)$ given by
\[ \lambda(x)f(y)=f(\check{x}y) \ {\rm for\ all}\ x,y\in H\ {\rm and\ for\ all \ } f\in L^2(H).\]
 Let $C^*(H)$ and
$C_{\lambda}^{*}(H)$ represent the full and reduced $C^{*}$-algebras
of $H$ respectively. The Von Neumann algebra associated to
$\lambda$ of H, namely, the bicommutant of $\lambda(L1(H))$ in
${\mathcal B}(L^2(H))$ is called the Von Neumannn algebra of $H$ and
is denoted by $VN(H)$ (see \cite{mu1,mu2}).  For any $f \in L^1(H)$ the norm of $C^*(H)$
is given by
$$
\|f\|_{C^*(H)} = \sup_{\pi \in \Sigma}\|\pi(f)\|_{{\mathcal B}({\mathcal H_{\pi}}))}
$$
when ${\mathcal H}_{\pi}$ is the Hilbert space related to  each $\pi \in \Sigma$;
accordingly we have
$$
\|f\|_{C_{\lambda}^*(H)} = \|\lambda(f)\|_{{\mathcal B}(L^2(H))}.
$$
\indent The Banach space dual of $C^*(H)$ is called the
Fourier-Stieltjes space and is denoted by $B(H)$. Let
$B_{\lambda}(H)$ denote the Banach space dual of the reduced
$C^*$-algebra $C_{\lambda}^*(H)$. In fact, $B_\lambda (H)$ can be
realized as a closed subspace of $B(H)$. The closed subspace
spanned by $\{f*\tilde{f} : f \in C_c(H)\}$ in $B_\lambda (H)$ is
called the Fourier space of H and is denoted by $A(H)$ \cite{mu1,mu2}. The Banach space dual of $A(H)$ can be identified with the Von Neumann algebra
of $H$ in the following way. For every $T\in VN(H)$ there exists a unique continuous linear functional $\varphi_T$ on $A(H)$ satisfying
\[ \varphi_T ((f*\tilde{g})\check{}\ )=\langle T(f),g\rangle_{L^2(H)}\  {\rm for\ all}\ f,g\in L^2(H).
\]
The mapping $T \rightarrow \varphi_T$ is a Banach space isomorphism between $VN(H)$ and $A(H)^*$. By an abuse of notation, we use $T$ to represent $\varphi_T$.

\section{Lebesgue-Fourier algebra for general hypergroups } \label{sec:lebesgue-fourier algebra}

Let $H$ be a hypergroup and let us define
$$
{\mathcal L}A(H):= L^1 (H) \cap A(H)
$$
and
$$
|||f|||=\|f\|_1 + \|f\|_{A(H)}.
$$
for each $f\in {\mathcal L}A(H)$. Similar to
\cite[lemma 2.1]{gl1}, we have

\begin{lemma}\label{114}
Given a hypergroup $H$, ${\mathcal L}A(H)$ is a dense left ideal of $L^1 (H)$.
\end{lemma}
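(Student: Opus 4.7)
The plan is to verify separately that $\mathcal{L}A(H)$ is a left ideal of $L^1(H)$ and that it is norm dense in $L^1(H)$, following the template of \cite[Lemma~2.1]{gl1} adapted to the hypergroup setting.

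For the ideal property, I would fix $f \in L^1(H)$ and $g \in \mathcal{L}A(H)$ and use that $A(H)$ carries a contractive left Banach $L^1(H)$-module structure under convolution, i.e.\ $\|f*u\|_{A(H)} \le \|f\|_1\,\|u\|_{A(H)}$ for every $u\in A(H)$. This would be proved in two steps: first, on a generator $h*\tilde h$ with $h\in C_c(H)$, the identity $f*(h*\tilde h)=(f*h)*\tilde h$ combined with the contractive action $L^1(H)\times L^2(H)\to L^2(H)$ recalled in Section~\ref{sec:prelim} shows that $f*(h*\tilde h)\in A(H)$ with norm at most $\|f\|_1\,\|h\|_2^{\,2}$; second, polarization and passage to the norm closure extend the estimate to all of $A(H)$. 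Combined with the trivial bound $\|f*g\|_1 \le \|f\|_1\,\|g\|_1$, this yields $f*g\in \mathcal{L}A(H)$ with $|||f*g|||\le\|f\|_1\,|||g|||$.

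For density, it suffices to exhibit a dense subspace of $L^1(H)$ contained in $\mathcal{L}A(H)$; the natural candidate is $C_c(H)*C_c(H)$. The inclusion is immediate: for $h,k\in C_c(H)$, $h*k$ is compactly supported and continuous, hence lies in $L^1(H)$, while $h*k=h*\tilde{(\tilde k)}$ with $\tilde k\in C_c(H)\subseteq L^2(H)$ expresses $h*k$, via the polarization identity
\[
4\,h*\tilde{(\tilde k)} = \sum_{j=0}^{3} i^{-j}\,(h+i^j\tilde k)*\widetilde{(h+i^j\tilde k)},
\]
as a finite linear combination of generators of $A(H)$. The $L^1$-density of $C_c(H)*C_c(H)$ in $L^1(H)$ follows from a standard approximate-identity argument, valid in any hypergroup, together with the density of $C_c(H)$ in $L^1(H)$.

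The main obstacle I anticipate is verifying the contractive left $L^1(H)$-module structure on $A(H)$. The associativity identity $f*(h*\tilde h)=(f*h)*\tilde h$ and the survival of the norm bound through polarization and closure are not entirely formal in the hypergroup setting, where involution and convolution interact less transparently than in the group case. If the direct route proves awkward, I would instead establish the bound dually, by identifying convolution-by-$f$ on $A(H)$ with the pre-adjoint of a bounded action of $f$ on $VN(H)$ of norm at most $\|f\|_1$, and then extract the module estimate from that.
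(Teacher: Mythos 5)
Your proposal is correct and follows essentially the same route as the paper: both derive the left ideal property from the factorization $f*(\phi*\tilde{\varphi})=(f*\phi)*\tilde{\varphi}$ together with the contractive action of $L^1(H)$ on $L^2(H)$, and both obtain density from compactly supported convolution products (the paper via an approximate identity for $L^1(H)$ whose elements lie in ${\mathcal L}A(H)$, you via $C_c(H)*C_c(H)$ and polarization). The one subtlety you rightly flag --- promoting the norm bound from generators to all of $A(H)$ --- is settled by Muruganandam's characterization of $\|\cdot\|_{A(H)}$ as an infimum over $L^2$-factorizations (or by your dual $VN(H)$ fallback), a point the paper itself passes over silently.
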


\begin{proof}

 By \cite[corollary 2.12]{mu1}, we know that
\[ \left\{ \sum_{i=1}^{n} \phi_{i}*\tilde{{\varphi_{i}}} | \ \phi_i,\varphi_i \in L^2 (H)\ n \in \Bbb{N}\right\}\]
is dense in $A(H)$. But  $g*\phi$ is  an element of $L^2 (H)$ for each $g \in L^1 (H)$ and $\phi \in L^2 (H)$, so
$\sum_{i=1}^{n} g * (\phi_{i} * \tilde{\varphi_{i}} ) = \sum_{i=1}^{n}(g* \phi_{i}) * \tilde{\varphi_{i}}$  belongs to ${\mathcal L}A(H)$ for each $g \in L^1 (H)$
and $\sum_{i=1}^{n} \phi_{i} * \tilde{\varphi_{i}}\in {\mathcal L}A(H)$.

To show ${\mathcal L}A(H)$ is a dense set in
$L^1 (H)$ we refer to \cite[proposition 2.22]{mu1} and
\cite[lemma 2.1]{sk} to
generate a left bounded approximate identity for $L^1 (H)$ whose elements belong to ${\mathcal L}A(H)$.

\end{proof}

\begin{proposition}\label{116}
Let $H$ be a  hypergroup. Then $({\mathcal L}A(H),|||\cdot|||)$ with the induced multiplication from $L^1(H)$ is a Banach algebra.
\end{proposition}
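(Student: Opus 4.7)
The plan is to verify the three things required to be a Banach algebra: (i) completeness under $|||\cdot|||$, (ii) that $f \ast g \in {\mathcal L}A(H)$ for all $f,g \in {\mathcal L}A(H)$, and (iii) submultiplicativity $|||f\ast g||| \le |||f|||\cdot|||g|||$. Observation (ii) is immediate from Lemma~\ref{114}, since ${\mathcal L}A(H) \subset L^1(H)$ and ${\mathcal L}A(H)$ is a left ideal of $L^1(H)$; so the work is in (i) and (iii).

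For completeness, I would take a $|||\cdot|||$-Cauchy sequence $\{f_n\}$. It is Cauchy separately in $\|\cdot\|_1$ and in $\|\cdot\|_{A(H)}$, so there exist $f\in L^1(H)$ and $g\in A(H)$ with $f_n\to f$ in $L^1(H)$ and $f_n\to g$ in $A(H)$. The key point is to identify $f$ and $g$: elements of $A(H)$ are continuous and (as $A(H)$ embeds continuously into $C_0(H)$ via the duality with $VN(H)$) $\|\cdot\|_{A(H)}$-convergence forces uniform convergence, while $L^1$-convergence yields a subsequence converging a.e.\ to $f$. Hence $f = g$ a.e., so $f\in {\mathcal L}A(H)$ and $f_n\to f$ in $|||\cdot|||$.

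For submultiplicativity, I would split $|||f\ast g||| = \|f\ast g\|_1 + \|f\ast g\|_{A(H)}$. The first summand is bounded by $\|f\|_1\|g\|_1$ because $L^1(H)$ is a convolution algebra. For the second summand, I would prove the module estimate
\[
\|f\ast g\|_{A(H)} \;\le\; \|f\|_1 \, \|g\|_{A(H)} \qquad (f\in L^1(H),\ g\in A(H))
\]
by representing $g$, using \cite[corollary 2.12]{mu1}, as a series $g = \sum_i \phi_i \ast \tilde{\varphi_i}$ with $\sum_i \|\phi_i\|_2\|\varphi_i\|_2 < \|g\|_{A(H)} + \varepsilon$. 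Then associativity of convolution gives $f\ast g = \sum_i (f\ast \phi_i)\ast \tilde{\varphi_i}$, and the elementary bound $\|f\ast \phi_i\|_2 \le \|f\|_1 \|\phi_i\|_2$ yields $\|f\ast g\|_{A(H)} \le \|f\|_1(\|g\|_{A(H)} + \varepsilon)$; letting $\varepsilon \to 0$ gives the claim. Combining,
\[
|||f\ast g||| \;\le\; \|f\|_1\|g\|_1 + \|f\|_1\|g\|_{A(H)} \;=\; \|f\|_1 \cdot |||g||| \;\le\; |||f|||\cdot|||g|||.
\]

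The main obstacle is the module inequality in the third paragraph, since nothing in the excerpt directly asserts that $A(H)$ is an $L^1(H)$-module; everything else (completeness, the $L^1$ convolution bound, and the ideal property) is either standard or already proven. Fortunately, the expansion from \cite[corollary 2.12]{mu1} together with the fact that convolution by an $L^1$-function is a contraction on $L^2(H)$ reduces the obstacle to routine bookkeeping on partial sums.
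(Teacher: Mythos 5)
Your proposal is correct and follows essentially the same route as the paper: identify the $L^1$- and $A(H)$-limits of a Cauchy sequence via a.e.\ convergence and the bound $\|\cdot\|_\infty\le\|\cdot\|_{A(H)}$, and derive submultiplicativity from the module estimate $\|f\ast g\|_{A(H)}\le\|f\|_1\|g\|_{A(H)}$ obtained from $\|f\ast\phi\|_2\le\|f\|_1\|\phi\|_2$ applied to representations $\sum_i\phi_i\ast\tilde{\varphi_i}$. Your version merely spells out, via the infimum over such representations, the passage that the paper compresses into a single ``consequently.''
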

\begin{proof}
Let $\{f_n\}$ be a Cauchy sequence  in ${\mathcal
L}A(H)$ which $\|\cdot\|_1$-converges to some $f\in
L^1(H)$ and $\|\cdot\|_{A(H)}$-converges  to some $f'\in A(H)$. There exists a
subsequence $\{f_{n_k}\}$ of $\{f_n\}$ which pointwise converges to $f$ almost everywhere. Moreover, based on   \cite[remark 2.9]{mu1}, $\|f\|_{\infty}\leq
\|f\|_{A(H)}$  for each $f \in {\mathcal L}A(H)$; therefore $f=f'$ almost everywhere.
So $|||f_n -f||| \rightarrow 0$.

Sine  $\|g * \phi\|_{2} \leq \|g\|_{1} \|\phi\|_{2}$ for each $g \in L^1 (H)$ and $\phi \in L^2 (H)$,
$$
\|g*(\phi*\tilde{\varphi})\|_{A(H)} \leq \|g\|_{1}
\|\phi*\tilde{\varphi}\|_{A(H)}\ {\rm for}\ g \in L^1(H)\ {\rm and}\ \phi,\varphi \in
L^2(H).
$$
Consequently, we have $||| g * f ||| \leq \|g\|_{1} |||f|||$ for each $f \in {\mathcal L}A(H)$. Hence,
\[ ||| g * f ||| \leq |||g|||\; |||f||| \]
when $f,g \in
{\mathcal L}A(H)$.

\end{proof}

\indent We know that ${\mathcal L}A(H)$ has an approximate identity
whose elements are bounded in $\|\cdot \|_1$ \cite[lemma 2.1]{sk}. Accordingly,
 in the following proposition we study the existence of an
 approximate identity for ${\mathcal L}A(H)$ bounded in $|||\cdot|||$.

\begin{proposition}\label{120}
Let $H$  be a hypergroup. The followings are equivalent:\\
\emph{(a)} $H$ is discrete;\\
\emph{(b)} ${\mathcal L}A(H) = L^1(H)$;\\
\emph{(c)} ${\mathcal L}A(H)$  has a bounded approximate identity.
\end{proposition}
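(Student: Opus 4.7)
The plan is to close the cycle (a)$\Rightarrow$(b)$\Rightarrow$(c)$\Rightarrow$(a). The implication (b)$\Rightarrow$(c) is soft: if $\mathcal{L}A(H) = L^1(H)$ as sets, then $(L^1(H), \|\cdot\|_1)$ and $(L^1(H), |||\cdot|||)$ are both Banach (completeness under $|||\cdot|||$ comes from Proposition \ref{116}) and the identity map between them is continuous, so the Open Mapping Theorem upgrades the inequality $\|\cdot\|_1 \leq |||\cdot|||$ to an equivalence of norms. The $\|\cdot\|_1$-bounded left approximate identity for $L^1(H)$ lying inside $\mathcal{L}A(H)$, supplied by the proof of Lemma \ref{114}, is then automatically $|||\cdot|||$-bounded.

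For (a)$\Rightarrow$(b), I would use that for discrete $H$ the Haar weights $h(x) = m_H(\{x\})$ satisfy $h(x) \geq 1$ (a consequence of $(\delta_x * \delta_{\check{x}})(\{e\}) = 1/h(x) \leq 1$, under the standard normalization $h(e) = 1$). This yields a continuous embedding $L^1(H) \hookrightarrow L^2(H)$ with $\|f\|_2 \leq \|f\|_1$, via the pointwise bound $|f(x)| \leq \|f\|_1/h(x) \leq \|f\|_1$. Since $\check{e} = e$ the function $\chi_{\{e\}}$ is self-involutive, and a direct calculation shows it is a convolution unit on $L^1(H)$ in the discrete setting; hence for every $f \in L^1(H)$ one has $f = f * \tilde{\chi}_{\{e\}}$ with both factors in $L^2(H)$. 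By the description of $A(H)$ recalled in Section~1 (via \cite[corollary 2.12]{mu1}) this places $f$ in $A(H)$ with $\|f\|_{A(H)} \leq \|f\|_2 \|\chi_{\{e\}}\|_2 \leq \|f\|_1$.

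For the key implication (c)$\Rightarrow$(a), let $\{e_\alpha\}$ be a bounded approximate identity of $\mathcal{L}A(H)$ with $|||e_\alpha||| \leq C$. The inequality $\|\cdot\|_\infty \leq \|\cdot\|_{A(H)}$ from \cite[remark 2.9]{mu1} gives $\|e_\alpha\|_\infty \leq C$. Combined with $\|e_\alpha\|_1 \leq C$ and the density of $\mathcal{L}A(H)$ in $L^1(H)$ from Lemma \ref{114}, a standard $\varepsilon/3$ argument upgrades the approximation to all of $L^1(H)$, giving $\|e_\alpha * f - f\|_1 \to 0$ for every $f \in L^1(H)$. Young's inequality for hypergroup convolution then yields $\|e_\alpha * f\|_\infty \leq \|e_\alpha\|_\infty \|f\|_1 \leq C\|f\|_1$, and extracting a subsequence of $\{e_\alpha * f\}$ converging to $f$ almost everywhere transfers this bound to $f$ itself: $|f(x)| \leq C\|f\|_1$ a.e., for every $f \in L^1(H)$. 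Specializing to characteristic functions of Borel sets of positive finite measure forces the uniform lower bound $m_H(U) \geq 1/C$, which rules out non-discreteness of $H$. The principal obstacle will be exactly this final step: deducing discreteness from the resulting $L^1 \hookrightarrow L^\infty$ embedding relies on the measure-theoretic fact that a non-discrete locally compact hypergroup admits Borel sets of arbitrarily small positive Haar measure.
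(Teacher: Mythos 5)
Your proposal is correct in substance but organizes the equivalences differently from the paper and replaces its functional-analytic shortcuts with direct estimates. The paper proves the four implications (a)$\Rightarrow$(b), (a)$\Rightarrow$(c), (b)$\Rightarrow$(a) and (c)$\Rightarrow$(b): for (b)$\Rightarrow$(a) it applies the closed graph theorem to the inclusion $\iota\colon L^1(H)\to C_0(H)$ (available because $L^1(H)={\mathcal L}A(H)\subseteq A(H)\subseteq C_0(H)$) and then contradicts continuity using $m_H(\{e\})=0$; for (c)$\Rightarrow$(b) it derives $|||f|||\leq K\|f\|_1$ from $|||f*u_\gamma|||\leq\|f\|_1|||u_\gamma|||$ and concludes by density. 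You instead close the single cycle (a)$\Rightarrow$(b)$\Rightarrow$(c)$\Rightarrow$(a): your (b)$\Rightarrow$(c) via the open mapping theorem is essentially the paper's (c)$\Rightarrow$(b) run backwards and is fine; your explicit (a)$\Rightarrow$(b) via $f=f*\tilde{\chi}_{\{e\}}$ with $\|f\|_2\leq\|f\|_1$ fleshes out what the paper dismisses as easy; and your direct (c)$\Rightarrow$(a) extracts the embedding $\|f\|_\infty\leq C\|f\|_1$ straight from the bounded approximate identity, avoiding the closed graph theorem entirely, before invoking the same endgame as the paper (Jewett's Theorem 7.1B: $m_H(\{e\})>0$ iff $H$ is discrete, combined with outer regularity of $m_H$, which fully justifies the final step you flag as the ``principal obstacle''). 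Two small repairs are needed: with the paper's convention $g*f(x)=\int_H f(\check{y}x)g(y)\,dy$ the clean sup-norm estimate is $\|f*e_\alpha\|_\infty\leq\|f\|_1\|e_\alpha\|_\infty$ (since $|e_\alpha(\check{y}x)|\leq\|e_\alpha\|_\infty$, the measures $\delta_{\check{y}}*\delta_x$ being probabilities), whereas the bound $\|e_\alpha*f\|_\infty\leq\|e_\alpha\|_\infty\|f\|_1$ you wrote requires a right-invariance (modular function) argument that is not automatic on hypergroups -- so run the argument with $f*e_\alpha$ and a right approximate identity; and since $(e_\alpha)$ is a net, you should first pass to a sequence converging in $L^1$ before extracting an almost-everywhere convergent subsequence. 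Neither affects the validity of the approach.
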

\begin{proof} It is easy see $(a) \Rightarrow (b)$
and $(a) \Rightarrow (c)$.

$(b) \Rightarrow (a)$. We know $L^1(H) \subseteq A(H)
\subseteq C_0(H)$. We define $\iota: L^1(H)
\rightarrow C_0(H)$ where $\iota(f)$ is the function in $C_0(H)$ and
in the equivalence class of $f$. We will show that $\iota$
is continuous. Let $\{f_n\}$ be a sequence in $L^1(H)$ converging
to $f\in L^1(H)$; as a result there exists a subsequence
$\{f_{n_k} \}$ that converges to $f$ pointwise almost everywhere. If
$\{\iota(f_n)\}$ converges to $f'\in C_0(H)$, it follows $f'=\iota(f)$ almost everywhere. So by
closed graph theorem, $\iota$ is a continuous map.
Let $H$ be a non-discrete hypergroup, then $m_H(\{e\})=0$ by
\cite[theorem 7.1B]{jew}. By an argument similar to the proof
 of \cite[proposition 2.3]{gl1}, there is a bounded net
$\{f_\gamma\}$ in $L^1(H)$  when $\iota(f_\gamma) \rightarrow
\infty$, which contradicts the continuity of $\iota$ as a linear
map.

$(c) \Rightarrow (b)$. By assumption, there is a bounded right
approximate identity $(u_\gamma)$ such that $(u_\gamma)\subseteq{\mathcal L}A(H)$
and $|||u_\gamma|||\leq K$ for all $\gamma$. Since
$|||f*u_\gamma|||\leq\|f\|_1|||u_\gamma|||$ for all $f\in{\mathcal
L}A(H)$ and $\gamma$,  we have $|||f|||\leq K\|f\|_1$. On
the other hand, $\|f\|_1\leq |||f|||$. Thus
 the two norms $\|\cdot\|_1$ and $|||\cdot|||$ are equivalent on ${\mathcal L}A(H)$,  so
${\mathcal L}A(H)=L^1(H)$ by lemma \ref{114}.
\end{proof}

\begin{corollary}\label{124}
Let $H$ be a  hypergroup. If ${\mathcal L}A(H)$ with the multiplication induced from $L^1(H)$ is amenable then $H$ is discrete and amenable.
\end{corollary}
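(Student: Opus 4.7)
The plan is to combine Johnson's classical observation that every amenable Banach algebra has a bounded approximate identity with Proposition~\ref{120}, and then to invoke the hypergroup analogue of Johnson's theorem linking amenability of $L^1(H)$ to amenability of $H$.

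First, I would recall that if a Banach algebra $A$ is amenable, then $A$ admits a bounded approximate identity; this is standard and follows by applying the amenability hypothesis to a suitable derivation or, more directly, from the existence of a virtual (hence approximate) diagonal. Applying this to $A=\mathcal{L}A(H)$ gives a bounded approximate identity for $\mathcal{L}A(H)$ with respect to the norm $|||\cdot|||$.

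Next, I would feed this into Proposition~\ref{120}: the implication $(c)\Rightarrow(a)$ immediately yields that $H$ is discrete, and the implication $(c)\Rightarrow(b)$ together with the open mapping theorem shows that $\mathcal{L}A(H)=L^1(H)$ as Banach algebras, i.e.\ the two norms $\|\cdot\|_1$ and $|||\cdot|||$ are equivalent. In particular, $L^1(H)$, regarded with its usual convolution structure and norm, is amenable (amenability is preserved under topological isomorphism of Banach algebras).

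Finally, I would appeal to the hypergroup version of Johnson's theorem, namely that amenability of $L^1(H)$ forces amenability of the hypergroup $H$. Since $H$ has just been shown to be discrete, one can actually argue this last step directly: for a discrete hypergroup, a bounded approximate diagonal for $\ell^1(H)$ yields an invariant mean on $\ell^\infty(H)$ by the usual averaging argument (passing to a weak$^{*}$ cluster point of the first marginal and using left-invariance of convolution by the Haar weights on singletons). The main obstacle in the write-up is this last step: one must be careful to use the correct notion of amenability for a hypergroup (existence of a left-invariant mean on $L^\infty(H)$) and to verify that the standard group-theoretic manipulation with a bounded approximate diagonal goes through in the hypergroup setting, where convolution of point masses is a probability measure rather than a point mass.
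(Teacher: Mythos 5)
Your proposal is correct and follows essentially the same route as the paper: amenability gives a bounded approximate identity (the paper cites Runde, Proposition 2.2.1), Proposition~\ref{120} then yields discreteness and ${\mathcal L}A(H)=\ell^1(H)$ with equivalent norms, and the final step is exactly the citation to Skantharajah's result that amenability of $\ell^1(H)$ implies amenability of $H$ (the paper uses \cite[Proposition 4.9]{sk} where you invoke the hypergroup analogue of Johnson's theorem). Your extra sketch of a direct approximate-diagonal argument for the last step is a reasonable supplement but not needed given the reference.
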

\begin{proof} If ${\mathcal L}A(H)$ is an amenable algebra, then based on  \cite[proposition 2.2.1]{ru}, it has a bounded approximate identity. So
by proposition  \ref{120} we have that $H$ is discrete. Also since ${\mathcal L}A(H)=l^1 (H)$, by \cite[proposition 4.9]{sk}
$H$ is amenable.
\end{proof}

\indent The converse of the preceding corollary is not true in general, because the amenability of $H$ does not show the amenability of
$L^1 (H)$ (see \cite{sk} and to consider a counter example see \cite{ar}).\\

\indent Let $H$ be a commutative hypergroup. The dual space of
 the hypergroup algebra, $L^1(H)$, can be identified with the usual
Banach space $L^\infty(H)$, and its structure space is homomorphic
to the character space of $H$, i.e.
$$
{\mathcal X}^b(H):=\Big{\{}\alpha\in C^b(H): \alpha(e)=1,
\alpha(xy)=\alpha(x)\alpha(y),~\hbox{for all}~ x,y\in H\Big\}
$$
equipped with the compact-open topology. ${\mathcal X}^b(H)$ is a
locally compact Hausdorff space. Let $\widehat{H}$ denote the set
of all hermitian characters $\alpha$ in ${\mathcal X}^b(H)$, i.e.
$\alpha(\check{x})=\overline{\alpha(x)}$ for every $x\in H$
with a Plancherel measure $\pi_H$. Note that $\widehat{H}$ in
general may not have the dual hypergroup structure  and a proper
inclusion in $supp(\pi_H)\subseteq \widehat{H}\subseteq{\mathcal
X}^b(H)$ is possible.

The Fourier-Stieltjes transform of $\mu\in M(H)$,
$\widehat{\mu}\in C^b(\widehat{H})$, is given by
$\widehat{\mu}(\alpha):=\int_H \overline{\alpha(x)}d\mu(x)$. Its
restriction to $L^1(H)$ is called the Fourier transform. We have
$\widehat{f}\in C_0(\widehat{H})$ for $f\in L^1(H)$, and the map that takes
$\alpha$ to $I(\alpha) = \ker(\phi_\alpha)$ is a bijection of
$\widehat{H}$ onto the space of all maximal ideals of $L^1(H)$,
where $\ker(\phi_\alpha)$ denotes the kernel of the homomorphisms
$\phi_\alpha(f)=\widehat{f}(\alpha)$ on $L^1(H)$ (see \cite{b.d}).

Let ${\mathcal A}$ be a Banach algebra and $\sigma({\mathcal A})$
be the set of all non-zero characters on ${\mathcal A}$.  Kaniuth,
Lau and Pym  \cite{klp,La1} introduced and studied  the concept of
{\it $\phi$-amenability} for Banach algebras as a generalization
of left amenability of  Lau algebras when $\phi\in\sigma({\mathcal
A})$.  ${\mathcal A}$ is {\it $\phi$-amenable} if there exists a
bounded net $(a_\gamma)$ in ${\mathcal A}$ such that
$\phi(a_{\gamma})\rightarrow 1$ and $\| aa_{\gamma}-
\phi(a)a_{\gamma}\| \rightarrow 0$ for all $a \in {\mathcal A}$.
Any such net is called a bounded approximate $\phi$-mean.

The notion of $\alpha$-amenable hypergroups was introduced and
studied in \cite{fls}. As shown in \cite{az2}, $H$ is
$\alpha$-amenable if and only if $L^1(H)$ is
$\phi_\alpha$-amenable. In the following theorem we explore the
connection between  $\alpha$-amenability of $H$ and
$\phi_\alpha$-amenability of ${\mathcal L}A(H)$.

\begin{theorem}\label{LA3}
Let $H$ be a commutative  hypergroup and let
$\alpha\in\widehat{H}$ be real-valued. Then the Lebesgue-Fourier
algebra, ${\mathcal L}A(H)$, is $\phi_\alpha$-amenable if and only if
$H$ is $\alpha$-amenable.
\end{theorem}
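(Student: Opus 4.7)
The plan is to reduce the theorem to a transfer of $\phi_\alpha$-amenability between $L^1(H)$ and its dense ideal ${\mathcal L}A(H)$, relying on the equivalence from \cite{az2} that identifies $\alpha$-amenability of $H$ with $\phi_\alpha$-amenability of $L^1(H)$. Thus it suffices to show that ${\mathcal L}A(H)$ is $\phi_\alpha$-amenable (with respect to $\phi_\alpha|_{{\mathcal L}A(H)}$) if and only if $L^1(H)$ is $\phi_\alpha$-amenable. The structural ingredients are already at hand: Lemma \ref{114} gives density of ${\mathcal L}A(H)$ in $L^1(H)$, the proof of Proposition \ref{116} yields the module estimate $|||g*f||| \le \|g\|_1\,|||f|||$ for $g\in L^1(H)$ and $f\in{\mathcal L}A(H)$, and $\|\cdot\|_1 \le |||\cdot|||$ holds trivially. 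In particular, ${\mathcal L}A(H)$ is an abstract Segal algebra in $L^1(H)$, and by density $\phi_\alpha$ restricts to a nonzero character on ${\mathcal L}A(H)$.

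For the forward direction, I would take a bounded approximate $\phi_\alpha$-mean $(u_\gamma) \subseteq {\mathcal L}A(H)$ and note that it is automatically bounded in $\|\cdot\|_1$. Given $a\in L^1(H)$ and $\varepsilon>0$, pick $b\in{\mathcal L}A(H)$ with $\|a-b\|_1<\varepsilon$ via Lemma \ref{114}, and split $\|a*u_\gamma - \phi_\alpha(a)u_\gamma\|_1$ as
\[
\|(a-b)*u_\gamma\|_1 + \|b*u_\gamma - \phi_\alpha(b)u_\gamma\|_1 + |\phi_\alpha(b-a)|\,\|u_\gamma\|_1 .
\]
The middle term tends to $0$ because $\|\cdot\|_1\le |||\cdot|||$ and $(u_\gamma)$ is a $\phi_\alpha$-mean for ${\mathcal L}A(H)$, while the remaining two are controlled by $\varepsilon$ times a uniform bound. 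Thus $L^1(H)$ is $\phi_\alpha$-amenable, and \cite{az2} delivers $\alpha$-amenability of $H$.

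For the converse, starting from a bounded approximate $\phi_\alpha$-mean $(f_\gamma) \subseteq L^1(H)$, I would fix $h\in{\mathcal L}A(H)$ with $\phi_\alpha(h)=1$ (possible since $\phi_\alpha|_{{\mathcal L}A(H)}\neq 0$) and set $u_\gamma:=h*f_\gamma$, which lies in ${\mathcal L}A(H)$ because ${\mathcal L}A(H)$ is a left ideal in $L^1(H)$. The Segal module estimate gives $|||u_\gamma|||\le |||h|||\,\|f_\gamma\|_1$, so $(u_\gamma)$ is bounded in $|||\cdot|||$, and $\phi_\alpha(u_\gamma)=\phi_\alpha(h)\phi_\alpha(f_\gamma)\to 1$. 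For $a\in{\mathcal L}A(H)$, commutativity of $L^1(H)$ (inherited from $H$) allows the rewriting $a*h*f_\gamma = h*(a*f_\gamma)$, whence
\[
a*u_\gamma - \phi_\alpha(a)u_\gamma \;=\; h*\bigl(a*f_\gamma - \phi_\alpha(a)f_\gamma\bigr),
\]
and the estimate then forces $|||a*u_\gamma - \phi_\alpha(a)u_\gamma||| \le |||h|||\,\|a*f_\gamma-\phi_\alpha(a)f_\gamma\|_1 \to 0$.

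The main obstacle is the converse: a bounded approximate $\phi_\alpha$-mean in $L^1(H)$ generally does not sit inside ${\mathcal L}A(H)$, and even once one multiplies by an element of ${\mathcal L}A(H)$ to fall back into the ideal, one needs an estimate controlling the $|||\cdot|||$-norm — not merely $\|\cdot\|_1$ — of the defect $a*u_\gamma - \phi_\alpha(a)u_\gamma$. The commutativity assumption on $H$ is exactly what enables the exchange $a*h*f_\gamma = h*(a*f_\gamma)$, so that the outer convolution by the fixed element $h\in{\mathcal L}A(H)$ converts $\|\cdot\|_1$-smallness of the $L^1$-defect into $|||\cdot|||$-smallness via the module inequality.
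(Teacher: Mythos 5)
Your proposal is correct, and its overall skeleton coincides with the paper's: both reduce the theorem, via the cited equivalence between $\alpha$-amenability of $H$ and $\phi_\alpha$-amenability of $L^1(H)$, to transferring bounded approximate $\phi_\alpha$-means across the abstract Segal algebra inclusion ${\mathcal L}A(H)\subseteq L^1(H)$, using the module inequality $|||g*f|||\leq\|g\|_1\,|||f|||$ and the trivial bound $\|\cdot\|_1\leq|||\cdot|||$. The direction ``$H$ $\alpha$-amenable $\Rightarrow$ ${\mathcal L}A(H)$ $\phi_\alpha$-amenable'' is essentially identical to the paper's: both fix $h_0\in{\mathcal L}A(H)$ with $\phi_\alpha(h_0)=1$ and push the $L^1$-mean into the ideal by convolving with $h_0$ (the paper writes $f_\gamma*h_0$, you write $h*f_\gamma$; by commutativity of $L^1(H)$ these agree, and note that strictly speaking the left-ideal property of Lemma \ref{114} covers $f_\gamma*h$, so the commutative rewriting you invoke anyway is what legitimizes membership in ${\mathcal L}A(H)$). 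The other direction is where you genuinely diverge: the paper again convolves the ${\mathcal L}A(H)$-mean $(h_\gamma)$ with a fixed $h_0$ to form $f_\gamma=h_0*h_\gamma$ and runs a two-term telescoping estimate, applying the mean property to the elements $f*h_0$ and $h_0$ of ${\mathcal L}A(H)$; you instead observe that $(u_\gamma)$ itself is already a $\|\cdot\|_1$-bounded net whose defect vanishes for every $a$ in the dense subalgebra ${\mathcal L}A(H)$, and extend to all of $L^1(H)$ by a standard $3\varepsilon$ density argument. Your route avoids the auxiliary element $h_0$ entirely in that direction and is arguably more transparent; the paper's route avoids the density approximation at the cost of the extra telescoping. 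Both are valid, and neither uses anything beyond what is already established in Lemma \ref{114} and Proposition \ref{116}.
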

\begin{proof} Suppose $H$ is $\alpha$-amenable. Then
$L^1(H)$ is $\phi_\alpha$-amenable by \cite[ Theorem 1.1]{az2}.
Thus there is a bounded approximate $\phi_\alpha$-mean in
$L^1(H)$, say $(f_\gamma)$. Fix $h_0\in {\mathcal L}A(H)$ such that
$\phi_\alpha(h_0)=1$ and set $h_\gamma=f_\gamma *h_0\in{\mathcal
L}A(H)$ for all $\gamma$, and consequently,  for each $h\in {\mathcal
L}A(H)$ we have
\begin{eqnarray*}
|||h*h_\gamma-\phi_\alpha(h)h_\gamma|||&\leq&
\|h*f_\gamma-\phi(h)f_\gamma\|_1\;|||h_0|||\rightarrow 0
\end{eqnarray*}
and $\phi_\alpha(h_\gamma)=\phi_\alpha(f_\gamma)\rightarrow 1$.
Since $(f_\alpha)$ is $\|\cdot\|_1$-bounded, it follows that
$(h_\gamma)$ is $|||\cdot |||$-bounded. Thus ${\mathcal L}A(H)$ is
$\phi_\alpha$-amenable.

Conversely,  suppose that ${\mathcal L}A(H)$ is
$\phi_\alpha$-amenable. Then there is  a bounded approximate
$\phi_\alpha$-mean $(h_\gamma)$ in ${\mathcal L}A(H)$. Fix
$h_0\in{\mathcal L}A(H)$ such that $\phi_\alpha(h_0)=1$ and set
$f_\gamma=h_0*h_\gamma$ for all $\gamma$. Since ${\mathcal L}A(H)$ is
a left ideal in $L^1(H)$, we have
\begin{align*}
\|f*f_\gamma-\phi_\alpha(f)f_\gamma\|_1 &=  \|f*h_0
*h_\gamma-\phi(f)h_0*h_\gamma\|_1\\
&\leq \|f*h_0*h_\gamma-\phi_\alpha(f)\phi_\alpha(h_0)h_\gamma\|_1 \\
& + \|\phi_\alpha(f)\phi_\alpha(h_0)h_\gamma-\phi(f)h_0*h_\gamma\|_1\\
&\leq |||f*h_0*h_\gamma-\phi_\alpha(F*h_0)h_\gamma|||\\
& +|\phi(f)|\;|||\phi_\alpha(h_0)h_\gamma-h_0*h_\gamma|||\rightarrow
0,
\end{align*}
and $\phi_\alpha(f_\gamma)=\phi_\alpha(h_\gamma)\rightarrow 1$ for
each $f\in L^1(H)$ . Since $\|\cdot\|_1\leq |||\cdot|||$, it
follows that $(f_\gamma)$ is a $\|\cdot\|_1$-bounded approximate
$\phi_\alpha$-mean in $L^1(H)$, and $L^1(H)$ is $\phi_\alpha$-left
amenable. Thus $H$ is $\alpha$-amenable  by \cite[Theorem
1.1]{az2}.
\end{proof}

\section{Lebesgue-Fourier algebra for regular Fourier hypergroups }

Let $H$ be a commutative hypergroup, we define
$$
S=\{\alpha \in \widehat{H}\ |\ |\widehat{\mu}(\alpha)| \leq
\|\lambda(\mu)\| {\textrm{ for all}}\ \mu \in M(H)\}
$$
A non empty closed subset of $\widehat{H}$, see \cite{mu1}. Muruganandam has defined $(F)$ condition as following.
\begin{definition}\label{201}
Let $H$ be a commutative hypergroup. We say $H$ satisfies $(F)$ condition if there exists $M>0$ satisfying the following
$$
\textrm{ For every pair } \alpha,\alpha'\in S,\ \alpha\alpha'
\textrm{ belongs to } B_{\lambda}(H) \textrm{ and }
\|\alpha\alpha'\|_{B(H)} \leq M.
$$
\end{definition}

\indent  Some interesting results for commutative hypergroups which satisfy $(F)$ condition have been obtained in \cite{mu1}. We quote the
following corollary.

\begin{corollary}\label{205}
Let $H$ be a commutative hypergroup satisfying condition $(F)$. Then the Fourier space $A(H)$ is an algebra under pointwise product.
Moreover,
$$
\| f\cdot g \|_{A(H)} \leq M \|f\|_{A(H)}\; \|g\|_{A(H)} \ {\rm for}\ f,g \in A(H).
$$
In particular if $M=1$, then $A(H)$ forms a Banach algebra. Similar results hold for $B_{\lambda}(H)$.
\end{corollary}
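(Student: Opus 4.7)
The plan is to establish the multiplicative bound on a dense subspace of $A(H)$ and extend by continuity. By Muruganandam's density result (quoted in the proof of Lemma \ref{114}), finite sums $\sum_i \phi_i * \tilde{\phi_i}$ with $\phi_i \in C_c(H)$ are dense in $A(H)$, so bilinearity reduces the problem to controlling pointwise products of generators of the form $\phi * \tilde{\phi}$.

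For such a generator I would invoke Plancherel's theorem on the commutative hypergroup $H$ to obtain the integral representation
$$(\phi * \tilde{\phi})(x) = \int_{S} \alpha(x)\, |\widehat{\phi}(\alpha)|^2\, d\pi_H(\alpha), \qquad x \in H,$$
with $|\widehat{\phi}|^2 \in L^1(S,\pi_H)$ of total integral $\|\phi\|_2^2$, which matches $\|\phi * \tilde{\phi}\|_{A(H)}$ since $\phi * \tilde{\phi}$ is positive definite. The analogous formula holds for $\psi * \tilde{\psi}$. Multiplying pointwise and applying Fubini yields
$$(\phi * \tilde{\phi})(\psi * \tilde{\psi}) = \int_S \!\int_S (\alpha\alpha')\, |\widehat{\phi}(\alpha)|^2\, |\widehat{\psi}(\alpha')|^2\, d\pi_H(\alpha)\, d\pi_H(\alpha'),$$
which I would interpret as a Bochner integral in $B_\lambda(H)$ of the map $(\alpha,\alpha') \mapsto \alpha\alpha'$. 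Condition $(F)$ bounds $\|\alpha\alpha'\|_{B(H)} \leq M$ uniformly on $S \times S$, so the Bochner integral converges in $B_\lambda(H)$ with norm at most $M\,\|\phi\|_2^2\,\|\psi\|_2^2 = M\,\|\phi * \tilde{\phi}\|_{A(H)}\,\|\psi * \tilde{\psi}\|_{A(H)}$, the required inequality on generators.

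The main obstacle is ensuring the pointwise product actually lies in the closed subspace $A(H) \subseteq B_\lambda(H)$ rather than merely in $B_\lambda(H)$. I would handle this by approximating $|\widehat{\phi}|^2$ and $|\widehat{\psi}|^2$ by compactly supported simple functions in $L^1(S,\pi_H)$, so that the corresponding Bochner integrals reduce to finite linear combinations $\sum_{j,k} c_{jk}\, \alpha_j \alpha'_k$; these in turn can be realized as $A(H)$-norm limits of products of truncated versions of $\phi * \tilde{\phi}$ and $\psi * \tilde{\psi}$, and hence belong to $A(H)$. Passing to the limit in $B_\lambda(H)$-norm (which coincides with the $A(H)$-norm on $A(H)$) then places the full product inside $A(H)$. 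Bilinear extension followed by density completes the proof; the case $M = 1$ gives the Banach algebra structure immediately, and the statement for $B_\lambda(H)$ follows from the same argument performed directly in $B_\lambda(H)$.
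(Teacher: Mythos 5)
First, a point of reference: the paper gives no proof of this corollary --- it is explicitly \emph{quoted} from Muruganandam \cite{mu1} --- so the only meaningful comparison is with his original argument, whose broad strategy (spectral representation of the generators $\phi*\tilde{\phi}$, condition $(F)$ applied to products of characters in $S$, and integral estimates in $B_\lambda(H)$) your sketch does correctly identify.

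There is, however, a genuine gap in the step where you place the product back inside $A(H)$. Approximating $|\widehat{\phi}|^2$ and $|\widehat{\psi}|^2$ by simple functions produces finite combinations $\sum_{j,k}c_{jk}\,\alpha_j\alpha_k'$ of products of characters, and you claim these belong to $A(H)$ as $A(H)$-norm limits of ``truncated'' products. But a product of bounded characters need not lie in $C_0(H)$ (for non-compact $H$ consider the constant character, which lies in $S$ since commutative hypergroups are amenable), whereas $A(H)\subseteq C_0(H)$; and since $A(H)$ is by definition a \emph{closed} subspace of $B_\lambda(H)$, an element outside $A(H)$ cannot be an $A(H)$-norm limit of elements of $A(H)$. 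So your simple-function approximants live only in $B_\lambda(H)$ and the truncation argument cannot close. The standard repair is to prove the inequality $\|uv\|_{B_\lambda(H)}\leq M\|u\|\,\|v\|$ entirely within $B_\lambda(H)$, where condition $(F)$ naturally operates, and to obtain $A(H)\cdot A(H)\subseteq A(H)$ by a separate, softer route: products of elements of the dense subspace $A(H)\cap C_c(H)$ are compactly supported elements of $B_\lambda(H)$, compactly supported elements of $B_\lambda(H)$ lie in $A(H)$, and density together with the norm bound finishes the argument. Two further points deserve care. Extending bilinearly from the diagonal generators only yields $\|uv\|\leq M\bigl(\sum_i\|\phi_i\|_2^2\bigr)\bigl(\sum_j\|\psi_j\|_2^2\bigr)$, and $\sum_i\|\phi_i\|_2^2$ may strictly exceed $\|u\|_{A(H)}$; to get the stated constant $M$ against the $A(H)$-norm one needs the Bochner-type structure theorem that every $u\in B_\lambda(H)$ of a commutative hypergroup is $\int_S\alpha\,d\mu(\alpha)$ with $\|u\|_{B_\lambda(H)}=\|\mu\|$, not merely the Plancherel formula for $\phi*\tilde{\phi}$. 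Finally, the Bochner integral of $(\alpha,\alpha')\mapsto\alpha\alpha'$ requires strong measurability, which condition $(F)$ does not supply; this is best bypassed by pairing scalarly with a dense subspace of $C_\lambda^*(H)$ and reading off the norm of $uv$ as a functional.
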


\indent This corollary led Muruganandam to define (regular)
Fourier hypergroups in \cite{mu1}.

\begin{definition}\label{210}
A hypergroup $H$ is called a {\em Fourier hypergroup} if
\begin{enumerate}
    \item The Fourier space $A(H)$ forms an algebra with pointwise product.
    \item There exists a norm on $A(H)$ which is equivalent to the
original norm with respect to which $A(H)$ forms a Banach algebra.
\end{enumerate}
A hypergroup is called a {\em regular Fourier hypergroup} if $A(H)$ is a
Banach algebra with its original norm and pointwise product.
\end{definition}

\indent As we have seen in corollary \ref{205}, all commutative
hypergroups which satisfy $(F)$ for some $M>0$ are Fourier
hypergroups. If $M=1$ then $H$ is a regular Fourier hypergroup.
Consequently, several (regular) Fourier hypergroups have been
introduced in \cite{mu1} section 4.1. We will scope on regular
hypergroups to pursue results for ${\mathcal L}A(H)$.

\begin{proposition}\label{220}
Let $H$ be a regular Fourier hypergroup. Then ${\mathcal L}A(H)$ is a dense ideal in $A(H)$.
\end{proposition}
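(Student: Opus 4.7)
The statement has two parts: first, $\mathcal{L}A(H)$ is a (pointwise) ideal in $A(H)$; second, it is norm-dense in $(A(H),\|\cdot\|_{A(H)})$. I would treat them in that order.

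For the ideal property, the essential input is the inequality $\|h\|_\infty \leq \|h\|_{A(H)}$ for $h \in A(H)$, noted in \cite[remark 2.9]{mu1} and already used in Proposition \ref{116}. Given $f \in A(H)$ and $g \in \mathcal{L}A(H)$, the pointwise product $fg$ lies in $A(H)$ because, by the regular Fourier hypergroup hypothesis, $A(H)$ is a Banach algebra under pointwise multiplication; and it lies in $L^1(H)$ because
\[
\|fg\|_1 \leq \|f\|_\infty\, \|g\|_1 \leq \|f\|_{A(H)}\, \|g\|_1 < \infty.
\]
Combining these, $fg \in \mathcal{L}A(H)$, with the bound $|||fg||| \leq \|f\|_{A(H)}\, |||g|||$ coming along for free.

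For density, my strategy is to exhibit an explicit dense subspace of $A(H)$ sitting inside $\mathcal{L}A(H)$. By the definition of $A(H)$ recalled in Section~1, the linear span of $\{f * \tilde{f} : f \in C_c(H)\}$ is dense in $A(H)$. For $f \in C_c(H)$, the function $\tilde{f}$ is again in $C_c(H)$, and by Jewett's hypergroup axioms the convolution of two compactly supported continuous functions is again compactly supported and continuous (compactness of the support relies on the fact that the hypergroup product of two compact sets is compact). Hence $f * \tilde{f} \in C_c(H) \subseteq L^1(H)$, and since $f * \tilde{f} \in A(H)$ by construction, we obtain $f * \tilde{f} \in \mathcal{L}A(H)$. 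Density of $\mathcal{L}A(H)$ in $A(H)$ then follows at once.

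Neither half poses a real obstacle: the ideal property reduces to the sup-norm estimate already exploited in Proposition~\ref{116}, and the density reduces to the standard hypergroup fact that $C_c(H)$ is closed under convolution and involution. If any step demands care, it is simply confirming that the $C_c$-stability is cited correctly from Jewett's axiomatization, but this is routine bookkeeping rather than a substantive difficulty.
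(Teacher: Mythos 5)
Your proposal is correct and follows essentially the same route as the paper: the ideal property comes from the sup-norm bound $\|\cdot\|_\infty\leq\|\cdot\|_{A(H)}$ together with the regular Fourier hypothesis, and density comes from compactly supported elements of $A(H)$ (the paper simply cites that $A(H)\cap C_c(H)$ is dense, which your $f*\tilde{f}$ argument unpacks). If anything, your write-up is slightly cleaner, since the paper's one-line justification that $\phi\cdot f\in L^1(H)$ attributes the boundedness to the wrong factor, whereas your estimate $\|fg\|_1\leq\|f\|_{A(H)}\|g\|_1$ places it correctly.
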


\begin{proof}
 For each $f \in {\mathcal L}A(H)$ and $\phi \in
A(H)$, $\phi \cdot f$ is in $L^1(H)$, since
$f$ belongs to $C_0(H)$. Moreover, regular Fourier
hypergroup property of $H$ implies $\phi \cdot
f \in A(H)$. Because $A(H) \cap C_c(H)$ is dense in
$A(H)$, ${\mathcal L}A(H)$ is dense in $A(H)$, see
\cite[corollary 2.12]{mu1}.
\end{proof}

\begin{proposition}
Let $H$ be a regular Fourier hypergroup. Then $({\mathcal L}A(H),|||\cdot|||)$ with the pointwise multiplication is a Banach algebra.
\end{proposition}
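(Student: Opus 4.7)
The plan is to verify the two requirements for $({\mathcal L}A(H), |||\cdot|||)$ to be a Banach algebra under pointwise multiplication: completeness of the normed space, and submultiplicativity of $|||\cdot|||$ with respect to the pointwise product (which simultaneously shows that ${\mathcal L}A(H)$ is closed under this product).

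First, I would dispatch completeness by copying the argument of Proposition \ref{116} verbatim. Any Cauchy sequence $\{f_n\}$ with respect to $|||\cdot|||$ is Cauchy in both $\|\cdot\|_1$ and $\|\cdot\|_{A(H)}$, hence converges to some $f\in L^1(H)$ and $f'\in A(H)$ in the respective norms. Passing to a pointwise-a.e. convergent subsequence for the $L^1$-limit and using the estimate $\|h\|_\infty \leq \|h\|_{A(H)}$ (from \cite[remark 2.9]{mu1}) applied to $f'-f_{n_k}$, one concludes $f=f'$ a.e., so $f\in {\mathcal L}A(H)$ and $|||f_n-f|||\to 0$. The underlying normed space is therefore unchanged from the previous section.

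Next, for submultiplicativity, the point is that the two ingredients of $|||\cdot|||$ cooperate nicely with pointwise multiplication because $A(H)$-norm dominates the sup norm. For $f,g\in {\mathcal L}A(H)$, since $H$ is a regular Fourier hypergroup, $A(H)$ is a Banach algebra under pointwise product with its original norm, giving
\[
\|f\cdot g\|_{A(H)} \leq \|f\|_{A(H)}\;\|g\|_{A(H)}.
\]
For the $L^1$-component, using $\|f\|_\infty \leq \|f\|_{A(H)}$ again,
\[
\|f\cdot g\|_1 \leq \|f\|_\infty\;\|g\|_1 \leq \|f\|_{A(H)}\;\|g\|_1.
\]
Adding these two inequalities,
\[
|||f\cdot g||| \leq \|f\|_{A(H)}\bigl(\|g\|_1 + \|g\|_{A(H)}\bigr) = \|f\|_{A(H)}\;|||g||| \leq |||f|||\;|||g|||.
\]
In particular, $f\cdot g\in L^1(H)\cap A(H) = {\mathcal L}A(H)$.

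There is no real obstacle here: the only substantive input is that the regular Fourier hypothesis upgrades the pointwise product on $A(H)$ to a contractive Banach-algebra product, while the inequality $\|\cdot\|_\infty \leq \|\cdot\|_{A(H)}$ ensures that multiplying by a bounded function keeps $L^1$-integrability under control. The bookkeeping of the two summands in $|||\cdot|||$ is what makes the single factor $\|f\|_{A(H)}$ (which is $\leq |||f|||$) absorb both terms on the right-hand side.
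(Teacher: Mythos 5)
Your proposal is correct and follows essentially the same route as the paper: completeness is inherited from the Banach-space argument of Proposition \ref{116}, and submultiplicativity comes from combining the estimate $\|f\cdot g\|_1\leq\|f\|_{\infty}\|g\|_1\leq\|f\|_{A(H)}\|g\|_1$ with the contractive pointwise product on $A(H)$ guaranteed by the regular Fourier hypothesis. Your write-up merely spells out the bookkeeping that the paper leaves implicit.
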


\noindent {\it Proof.} As we have seen in the proof of proposition
\ref{116}, ${\mathcal L}A(H)$ is a Banach space. Since
$\|\cdot\|_{\infty}\leq\|\cdot\|_{A(H)}$, we have
$$
\|\phi \cdot f\|_{1} \leq \|\phi\|_{A(H)} \|f\|_1
$$
for each $\phi \in {\mathcal L}A(H)$ and $f \in L^1(H)$. Hence
$$
|||g\cdot f||| \leq |||g||| \; |||f|||
$$
for each $f,g \in {\mathcal L}A(H)$. $\hfill\Box$\\

\begin{proposition}\label{222}
Let $H$  be a regular Fourier hypergroup. The followings are equivalent:\\
\emph{(a)} $H$ is compact;\\
\emph{(b)} ${\mathcal L}A(H) = A(H)$;\\
\emph{(c)} ${\mathcal L}A(H)$ with the pointwise product has a bounded approximate identity.
\end{proposition}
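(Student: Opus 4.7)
The plan is to close the loop (a)$\Rightarrow$(b), (a)$\Rightarrow$(c), (c)$\Rightarrow$(b), (b)$\Rightarrow$(a), paralleling Proposition~\ref{120} and \cite[Proposition 2.3]{gl1}. For (a)$\Rightarrow$(b), compactness of $H$ gives $m_H(H)<\infty$, so $A(H)\subseteq C_0(H)=C(H)\subseteq L^1(H)$, whence ${\mathcal L}A(H)=A(H)$. For (a)$\Rightarrow$(c), a direct computation shows $\chi_H * \widetilde{\chi_H}\equiv m_H(H)$, so the constant function $1_H$ lies in $A(H)={\mathcal L}A(H)$ and is a pointwise multiplicative identity; a constant net $\{1_H\}$ is then a bounded approximate identity.

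The implication (c)$\Rightarrow$(b) mirrors the corresponding step of Proposition~\ref{120}. If $(u_\gamma)\subset{\mathcal L}A(H)$ is a $|||\cdot|||$-bounded pointwise approximate identity with $|||u_\gamma|||\leq K$, then for $f\in{\mathcal L}A(H)$ the inequality $\|\cdot\|_\infty\leq\|\cdot\|_{A(H)}$ together with the Banach-algebra property of $A(H)$ gives
$$
|||f\cdot u_\gamma|||=\|fu_\gamma\|_1+\|fu_\gamma\|_{A(H)}\leq\|f\|_{A(H)}\bigl(\|u_\gamma\|_1+\|u_\gamma\|_{A(H)}\bigr)\leq K\|f\|_{A(H)}.
$$
Letting $f\cdot u_\gamma\to f$ in $|||\cdot|||$ yields $|||f|||\leq K\|f\|_{A(H)}$, so the two norms are equivalent on ${\mathcal L}A(H)$. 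Since $({\mathcal L}A(H),|||\cdot|||)$ is complete, hence $\|\cdot\|_{A(H)}$-complete via this equivalence, and is dense in $A(H)$ by Proposition~\ref{220}, we conclude ${\mathcal L}A(H)=A(H)$.

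The main obstacle is (b)$\Rightarrow$(a). Assuming ${\mathcal L}A(H)=A(H)$, I would first apply the closed graph theorem to the inclusion $A(H)\hookrightarrow L^1(H)$, which is easily shown to be closed (using $A(H)\subseteq C_0(H)$ together with almost everywhere subsequential convergence of $L^1$-convergent sequences, exactly as in Proposition~\ref{120}), to extract a constant $C$ with $\|f\|_1\leq C\|f\|_{A(H)}$ for every $f\in A(H)$. Specialising to $f_K=\chi_K * \widetilde{\chi_K}$ for a compact $K\subseteq H$ one has $f_K\in A(H)$ with $\|f_K\|_{A(H)}\leq\|\chi_K\|_2^2=m_H(K)$, while an invariance calculation for the Haar measure gives $\|f_K\|_1\gtrsim m_H(K)^2$; the closed-graph bound then forces $m_H(K)\leq C$ uniformly over compact $K$, so $m_H(H)<\infty$ and $H$ is compact. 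The delicate point on which the argument really turns is securing the lower bound on $\|f_K\|_1$ in a possibly non-unimodular hypergroup. In the commutative case one can bypass this by noting that $A(H)\subseteq L^1(H)\cap L^\infty(H)\subseteq L^2(H)$ and translating the resulting continuous inclusion via Plancherel into $L^1(\widehat{H})\subseteq L^2(\widehat{H})$, which forces $\widehat{H}$ to be discrete and hence $H$ to be compact.
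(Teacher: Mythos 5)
Your proof is correct and follows essentially the same route as the paper: the trivial implications from (a), the norm-equivalence argument for (c)$\Rightarrow$(b), and the closed-graph-plus-unbounded-net argument for (b)$\Rightarrow$(a), where the paper simply delegates the net construction to \cite[Proposition 2.6]{gl1} and you spell it out via $\chi_K*\widetilde{\chi_K}$. The ``delicate point'' you flag is not a real obstacle: left invariance of the Haar measure gives $\|\chi_K*\widetilde{\chi_K}\|_1=m_H(K)\,m_H(\check{K})$ by Fubini, while $\|\chi_K*\widetilde{\chi_K}\|_{A(H)}\leq m_H(K)^{1/2}m_H(\check{K})^{1/2}$, so the ratio blows up without any unimodularity assumption and the commutative Plancherel detour is unnecessary.
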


\noindent {\it Proof.} Clearly $(a) \Rightarrow (b)$
and $(a) \Rightarrow (c)$.

$(b) \Rightarrow (a)$. We know $A(H) \subseteq L^1(H)$.
We define $i: A(H) \rightarrow L^1(H)$ where
$i(f)$ is the equivalent class of $f$ in $L^1(H)$. We
show that $i$ is continuous. Let $\{f_n\}$ be a convergent
sequence in $A(H)$ to some $f\in A(H)$ which implies $f_n
\rightarrow f$ uniformly. If $\{i(f_n)\}$ converges
to $f'\in L^1(H)$, and as a result there is a subsequence
$i(f_{n_k})$ which converges to $f'$ pointwise almost everywhere. Therefore $f'=i(f)$ almost everywhere. So by closed graph
theorem, $i$ is a continuous map. Let $H$ be a
non-compact hypergroup, then $m_H(H)=\infty$ by
\cite[theorem 7.2B]{jew}. By the same argument as in the proof of \cite[proposition 2.6]{gl1} we can make a bounded net
$\{f_\gamma\}$ in $A(H)$  when $\|i(f_\gamma)\|_1 \rightarrow \infty$,
which contradicts the continuity of $i$ as a linear map.

$(c) \Rightarrow (b)$. By assumption, there is a bounded
approximate identity, say $(u_\gamma)\subseteq{\mathcal L}A(H)$,
with $|||u_\gamma|||\leq K$ for all $\gamma$. Since $|||f\cdot
u_\gamma|||\leq\|f\|_{A(H)}|||u_\gamma|||$ for all $f\in{\mathcal
L}A(H)$ and $\gamma$, it follows that $|||f|||\leq K\|f\|_{A(H)}$.
On the other hand $\|f\|_{A(H)}\leq |||f|||$. Thus
 the two norms $\|\cdot\|_{A(H)}$ and $|||\cdot|||$ are equivalent on ${\mathcal L}A(H)$, and so
${\mathcal L}A(H)=A(H)$; this is because ${\mathcal L}A(H)$ is dense
in $A(H)$ under $\|\cdot\|_{A(H)}$.  $\hfill\Box$\\

\begin{corollary}\label{225}
Let $H$ be a regular Fourier hypergroup and ${\mathcal L}A(H)$ with the pointwise product be amenable. Then $H$ is compact,
and the Fourier algebra $A(H)$ is amenable.
\end{corollary}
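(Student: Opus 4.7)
The plan is to imitate the structure of Corollary \ref{124}, but substituting Proposition \ref{222} for Proposition \ref{120}. Concretely, amenability of $({\mathcal L}A(H),|||\cdot|||)$ under pointwise product implies, via the standard result \cite[proposition 2.2.1]{ru}, the existence of a bounded approximate identity in $({\mathcal L}A(H),|||\cdot|||)$.

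Next, I would feed this bounded approximate identity into Proposition \ref{222}: the implication $(c)\Rightarrow(a)$ yields compactness of $H$, while $(c)\Rightarrow(b)$ gives the identification ${\mathcal L}A(H)=A(H)$ as sets. Inspecting the proof of Proposition \ref{222} shows more, namely that on this common underlying space the norms $|||\cdot|||$ and $\|\cdot\|_{A(H)}$ are equivalent (with $\|f\|_{A(H)}\leq |||f|||\leq K\|f\|_{A(H)}$). Thus $(A(H),\|\cdot\|_{A(H)})$ and $({\mathcal L}A(H),|||\cdot|||)$ are isomorphic as Banach algebras (both carry the same pointwise product).

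Finally, I would invoke the well-known fact that amenability of a Banach algebra is preserved under topological (equivalently, bounded) isomorphism of Banach algebras; hence amenability of $({\mathcal L}A(H),|||\cdot|||)$ transfers to $A(H)$ with its own Fourier algebra norm, completing the proof.

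I do not expect any serious obstacle: the argument is a two-line chain of previously established implications, and the only point requiring a moment's care is the transfer of amenability under the norm equivalence, which is immediate from the definition since a bounded approximate diagonal for one norm remains bounded in any equivalent norm.
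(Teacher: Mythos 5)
Your proposal is correct and follows essentially the same route as the paper: amenability gives a bounded approximate identity via \cite[proposition 2.2.1]{ru}, Proposition \ref{222} then yields compactness and ${\mathcal L}A(H)=A(H)$, and amenability transfers to $A(H)$ by norm equivalence. The only cosmetic difference is that you extract the equivalence of $|||\cdot|||$ and $\|\cdot\|_{A(H)}$ directly from the estimates in the proof of Proposition \ref{222}, whereas the paper cites the open mapping theorem; both are valid.
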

\noindent {\it Proof.} Suppose that ${\mathcal L}A(H)$ is an amenable
algebra. Then by proposition 2.2.1 of \cite{ru}, it has a bounded
approximate identity. So, by proposition \ref{222},
$H$ is compact. Also since ${\mathcal L}A(H)=A(H)$, the norms
$|||\cdot|||$
and $\| \cdot \|_{A(H)}$ are equivalent by the open mapping theorem. So $A(H)$ is amenable. $\hfill\Box$\\

\indent Let $H$ be a regular Fourier hypergroup and let $x\in H$.
Then the functional $\phi_x$ given by $\phi_x(f)=f(x)$ for all
$f\in A(H)$  belongs to $\sigma(A(H))$ by \cite[proposition 2.22]{mu1}.

\begin{theorem}\label{LA3}
Let $H$ be a regular Fourier hypergroup. Then $A(H)$ is
$\phi_x$-amenable for all $x\in H$.
\end{theorem}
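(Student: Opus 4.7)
The goal is to produce, for each $x\in H$, a bounded approximate $\phi_x$-mean in $A(H)$: a bounded net $(u_\gamma)\subseteq A(H)$ with $u_\gamma(x)\to 1$ and $\|f u_\gamma - f(x) u_\gamma\|_{A(H)} \to 0$ for every $f\in A(H)$. By the definition of $\phi$-amenability recalled in Section 2, this is precisely $\phi_x$-amenability of $A(H)$.

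The construction I would attempt mirrors the one for Fourier algebras of locally compact groups. Using the spanning family $\{f*\tilde f : f\in L^2(H)\}$ of $A(H)$ provided by \cite[corollary 2.12]{mu1} together with the estimate $\|f*\tilde f\|_{A(H)}\leq \|f\|_2^2$, I would, for each compact neighborhood $V$ of $x$, choose a normalized vector $\xi_V\in L^2(H)$ concentrated at $x$ (built using the hypergroup translation by $x$, mimicking the group-case choice $m_H(V)^{-1/2}\mathbf 1_{xV}$) and set $u_V := \xi_V * \tilde\xi_V$. This gives a net in $A(H)$ with $\|u_V\|_{A(H)}\leq 1$, $u_V(x)\to 1$, and $\mathrm{supp}(u_V)$ shrinking around $x$ as $V$ shrinks.

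It remains to verify $\|f u_V - f(x) u_V\|_{A(H)} \to 0$ for every $f\in A(H)$. Since $A(H)\subseteq C_0(H)$ with $\|\cdot\|_\infty\leq \|\cdot\|_{A(H)}$, every such $f$ is continuous at $x$, and $(f-f(x))u_V$ is pointwise small and supported near $x$. The main obstacle---and what sets the hypergroup case apart from the group case---is promoting sup-norm smallness to $A(H)$-norm smallness, since the $A(H)$-norm is not translation-invariant for a general hypergroup. I would combine the Banach-algebra inequality $\|gu_V\|_{A(H)}\leq \|g\|_{A(H)}\|u_V\|_{A(H)}$ (valid because $H$ is regular Fourier) with density of $A(H)\cap C_c(H)$ in $A(H)$ to reduce to compactly supported $f$, and then decompose $f = f(x)v + (f - f(x)v)$ for a fixed $v\in A(H)$ with $v(x)=1$; the residual term vanishes at $x$ and, when multiplied by the localized bump $u_V$, should tend to zero in $A(H)$-norm. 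As an alternative route that bypasses the explicit construction of the net, one may invoke the dual criterion of \cite{klp}: a direct computation using $A(H)^* = VN(H)$ shows $u\cdot\lambda(x) = u(x)\lambda(x)$ for all $u\in A(H)$, which exhibits $\lambda(x)$ as a topologically left invariant element at $\phi_x$, from which $\phi_x$-amenability follows by the standard Hahn--Banach/Goldstine argument.
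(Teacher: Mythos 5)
Your proposal correctly identifies the hard point and then does not get past it. In your first route, the step ``the residual term vanishes at $x$ and, when multiplied by the localized bump $u_V$, should tend to zero in $A(H)$-norm'' is exactly the assertion that needs proof, and nothing in your sketch supplies it: the only estimate available is submultiplicativity, which gives $\|(f-f(x)v)u_V\|_{A(H)}\leq \|f-f(x)v\|_{A(H)}\|u_V\|_{A(H)}$, and vanishing of $f-f(x)v$ at the single point $x$ gives no control whatsoever on the $A(H)$-norm of its product with a bump concentrated at $x$ (sup-norm smallness does not majorize the $A(H)$-norm in either direction here). There is also a smaller slip: $\xi_V*\tilde{\xi}_V$ is peaked at $e$ no matter where $\xi_V$ is supported, so $u_V(x)\to 1$ fails; one needs a ``two-sided'' element such as the paper's $f_U(y)=m_H(U)^{-1}\langle \lambda(y)1_{\check{x}U},1_U\rangle=m_H(U)^{-1}m_H(y\check{x}U\cap U)$. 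Your second route is vacuous: the identity $u\cdot\phi=\phi(u)\phi$ in ${\mathcal A}^*$ holds for \emph{every} character $\phi$ on \emph{every} Banach algebra (it is just multiplicativity of $\phi$ read through the dual module action), so it cannot distinguish $\phi$-amenable situations from non-$\phi$-amenable ones; what the dual criterion of \cite{klp} actually requires is an element $F$ of the \emph{bidual} ${\mathcal A}^{**}$ with $F(\phi)=1$ and $F(T\cdot a)=\phi(a)F(T)$ for all $T\in {\mathcal A}^*$.

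The paper closes the gap precisely by working in the bidual rather than proving norm convergence. It takes the net $f_U$ above, checks $\|f_U\|_{A(H)}\leq m_H(U)^{-1}\|1_{\check{x}U}\|_2\|1_U\|_2=1$, and lets $F$ be a weak$^*$ cluster point of $(f_U)$ in $A(H)^{**}$. The mean identity $F(T\cdot f)=\phi_x(f)F(T)$ is then verified only on the generators $T=\lambda(y)$ of $VN(H)$, where everything is an explicit computation: $\langle \lambda(y), ff_U\rangle = f(y)f_U(y)$, and $F(\lambda(y))=\lim_U m_H(y\check{x}U\cap U)/m_H(U)$ equals $1$ for $y=x$ and $0$ otherwise, so $f(y)F(\lambda(y))=\phi_x(f)F(\lambda(y))$ in both cases. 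This is the idea your attempt is missing: replace the norm condition $\|fu_V-f(x)u_V\|_{A(H)}\to 0$ by the weak$^*$ cluster-point formulation, for which it suffices to test against the weak$^*$-generating family $\{\lambda(y):y\in H\}$ of $VN(H)=A(H)^*$.
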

\noindent {\it Proof.} Fix $x\in H$ and let ${\mathcal U}$ denote a net of relatively compact
neighborhoods of $e$ in $H$. For $U\in{\mathcal U}$, define
$f_U\in A(H)$ by
\begin{eqnarray*}
f_U(y) &=&m_H(U)^{-1}\langle \lambda(y)1_{\check{x}U},1_{U}\rangle_{L^2(H)}\\
&=&m_H(U)^{-1}\int_{U}1_{\check{x}U}(\check{y}z)dz\\
&=&m_H(U)^{-1}m_H(y\check{x}U\cap U)
\end{eqnarray*}
for all $y\in H$, where $\lambda$ is the left regular
representation of $H$ on $L^2(H)$. Recall that $VN(H)$ is
canonically identified with the dual space $A(H)^*$ by the paring
$\langle \lambda(y), f\rangle=f(y)$ (cf. \cite[proposition 2.21]{mu1}). Then
\[ \|f_U\|_{A(H)}\leq m_H(U)^{-1}\|1_{\check{x}U}\|_2\|1_U\|_2=1. \]
Let $F$ be  a
weak$^*$ cluster point of $(f_U)_U$ in $A(H)^{**}$. Then
$$
F(\phi_x)=\lim_U\langle f_U, \phi_x\rangle=f_U(x)=1,
$$
and moreover
\[ F({\lambda(y)})=  \lim_U \frac{m_H(y\check{x}U\cap
U)}{m_H(U)}=0 \]
for $y\neq x$. Therefore
\begin{eqnarray*}
F({\lambda(y)}\cdot f)&=&\lim_U\langle f_U,{\lambda(y)}\cdot f\rangle\\
&=&\lim_U\langle{\lambda(y)},ff_U\rangle\\
&=& f(y) \; \lim_U \frac{m_H(y\check{x}U\cap U)}{m_H(U)}\\
&=& f(y)\; F(\lambda(y))\\
&=& \phi_x(f)\; F(\lambda(y))
\end{eqnarray*}
for all $y\in H$ and $f\in A(H)$. Since  $\{ \lambda(y): \;y\in H\}$
generates $VN(H)$, we conclude that $F(T\cdot f)=\phi_x(f)\; F(T)$
for all $T\in VN(H)$ and $f\in A(H)$. This completes the
proof.$\hfill\Box$

\begin{corollary}\label{LA4}
Let $H$ be a regular Fourier hypergroup and ${\mathcal L}A(H)$ is
equipped with the pointwise product. Then ${\mathcal L}A(H)$ is
$\phi_x$-amenable for all $x\in H$.
\end{corollary}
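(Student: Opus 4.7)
The plan is to transfer a bounded approximate $\phi_x$-mean from $A(H)$ down to $\mathcal{L}A(H)$ using the ideal structure established in Proposition \ref{220}, following the same template used in the proof of Theorem \ref{LA3} for transferring between $L^1(H)$ and $\mathcal{L}A(H)$.

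First, I would invoke the preceding theorem to obtain a bounded approximate $\phi_x$-mean $(a_\gamma)$ in $A(H)$, with $\|a_\gamma\|_{A(H)} \leq C$, $\phi_x(a_\gamma) \to 1$, and $\|f\cdot a_\gamma - \phi_x(f) a_\gamma\|_{A(H)} \to 0$ for all $f \in A(H)$. Next, since $\mathcal{L}A(H)$ is dense in $A(H)$ by Proposition \ref{220} and $\phi_x$ is a nonzero continuous character on $A(H)$, I can fix $h_0 \in \mathcal{L}A(H)$ with $\phi_x(h_0) = 1$, and then set
\[
h_\gamma := a_\gamma \cdot h_0.
\]
Because $\mathcal{L}A(H)$ is an ideal in $A(H)$, each $h_\gamma$ lies in $\mathcal{L}A(H)$.

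The key verification is the estimate of $|||\cdot|||$, which I would split in the obvious way. For boundedness, using $\|\cdot\|_\infty \leq \|\cdot\|_{A(H)}$ (cited from \cite[remark 2.9]{mu1}) and the fact that $A(H)$ is a Banach algebra under pointwise product for a regular Fourier hypergroup, we have
\[
\|a_\gamma \cdot h_0\|_1 \leq \|a_\gamma\|_{A(H)}\,\|h_0\|_1, \qquad
\|a_\gamma \cdot h_0\|_{A(H)} \leq \|a_\gamma\|_{A(H)}\,\|h_0\|_{A(H)},
\]
so $|||h_\gamma||| \leq C\,|||h_0|||$. For the approximate mean property, for $h \in \mathcal{L}A(H)$ I would write
\[
|||h\cdot h_\gamma - \phi_x(h)h_\gamma||| = |||(h\cdot a_\gamma - \phi_x(h)a_\gamma)\cdot h_0|||,
\]
and bound each piece of this triple norm by $\|h\cdot a_\gamma - \phi_x(h)a_\gamma\|_{A(H)}\cdot |||h_0|||$, which tends to zero because $h \in A(H)$ and $(a_\gamma)$ is a $\phi_x$-mean in $A(H)$. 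Finally, $\phi_x(h_\gamma) = \phi_x(a_\gamma)\phi_x(h_0) \to 1$.

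I do not expect a real obstacle here: the argument is a direct analog of the $L^1(H)$-to-$\mathcal{L}A(H)$ transfer carried out in Theorem \ref{LA3}, with pointwise product replacing convolution, the inequality $\|fg\|_1 \leq \|f\|_{A(H)}\|g\|_1$ replacing $\|f\ast g\|_1 \leq \|f\|_1\|g\|_1$, and the regular-Fourier hypothesis ensuring submultiplicativity on the $A(H)$-component. The only point requiring a moment of care is confirming that $h_0$ with $\phi_x(h_0)=1$ actually exists in $\mathcal{L}A(H)$, which follows from density of $\mathcal{L}A(H)$ in $A(H)$ together with continuity and nontriviality of $\phi_x$.
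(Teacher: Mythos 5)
Your proposal is correct and follows essentially the same route as the paper: both transfer the bounded approximate $\phi_x$-mean from $A(H)$ to ${\mathcal L}A(H)$ by fixing $h_0\in{\mathcal L}A(H)$ with $\phi_x(h_0)=1$, setting $h_\gamma=f_\gamma h_0$, and bounding $|||hh_\gamma-\phi_x(h)h_\gamma|||$ by $\|hf_\gamma-\phi_x(h)f_\gamma\|_{A(H)}\,|||h_0|||$. Your extra checks (existence of $h_0$ via density, and the explicit $|||\cdot|||$-boundedness of $(h_\gamma)$) are details the paper leaves implicit but are handled the same way.
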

\noindent {\it Proof.} First note that since ${\mathcal L}A(H)$ is
dense in $A(H)$, $\phi_x|_{{\mathcal L}A(H)}\neq
0$ and $\phi_x|_{{\mathcal L}A(H)}$ belongs to $\sigma({\mathcal L}A(H))$.
In addition, since $A(H)$ is $\phi_x$-amenable, there is a
bounded approximate $\phi_x$-mean $(f_\gamma)$ in $A(H)$. Fix
$h_0\in {\mathcal L}A(H)$ such that $\phi_x(h_0)=1$ and set
$h_\gamma=f_\gamma h_0\in{\mathcal L}A(H)$ for all $\gamma$. Thus
for each $h\in {\mathcal L}A(H)$ we have
$$
|||hh_\gamma-\phi_x(h)h_\gamma|||\leq\|hf_\gamma-\phi_x(h)f_\gamma\|_{A(H)}\;|||h_0|||\rightarrow
0
$$
and $\phi_x(h_\gamma)=\phi(f_\gamma)\rightarrow 1$. Thus ${\mathcal
L}A(H)$ is $\phi_x$-amenable.$\hfill\Box$\\

\noindent {\bf Acknowledgment.} We thank  Morteza Kamalian and Professor Hossein Namazi for their useful comments.

\footnotesize

 \vspace{5mm}

 \noindent
   M. Alaghmandan\\
   Department of Mathematics and Statistics,\\
     University of Saskatchewan,\\
Saskatoon, SK, S7N 5E6, Canada\\
       {\bf Email:} mahmood.a@usask.ca\\

    \noindent
       R. Nasr-isfahani and M. Nemati\\
   Department of Mathematical Sciences,\\
     Isfahan Uinversity of Technology,\\
      Isfahan 84156-83111, Iran\\
       {\bf Emails:} isfahani@cc.iut.ac.ir,  m.nemati@math.iut.ac.ir

       \end{document}